\begin{document}

\title*{Band depths based on multiple time instances}

\author{Ignacio Cascos\inst{1}\and
Ilya Molchanov\inst{2}}

\institute{Department of Statistics, Universidad Carlos III de Madrid, Av. Universidad 30, E-28911 Legan\'es (Madrid), Spain
\texttt{ignacio.cascos@uc3m.es}
\and Department of Mathematical Statistics and Actuarial Science, University of Berne, Sidlerstr. 5, CH-3012 Berne, Switzerland \texttt{ilya.molchanov@stat.unibe.ch}}

\maketitle

\abstract{Bands of vector-valued functions $f:T\mapsto\mathbb{R}^d$ are
  defined by considering convex hulls generated by their values
  concatenated at $m$ different values of the argument. The obtained
  $m$-bands are families of functions, ranging from the conventional
  band in case the time points are individually considered (for $m=1$)
  to the convex hull in the functional space if the number $m$ of
  simultaneously considered time points becomes large enough to fill
  the whole time domain. These bands give rise to a depth concept that
  is new both for real-valued and vector-valued functions.}

\abstract*{Bands of vector-valued functions $f:T\mapsto\mathbb{R}^d$ are
  defined by considering convex hulls generated by their values
  concatenated at $m$ different values of the argument. The obtained
  $m$-bands are families of functions, ranging from the conventional
  band in case the time points are individually considered (for $m=1$)
  to the convex hull in the functional space if the number $m$ of
  simultaneously considered time points becomes large enough to fill
  the whole time domain. These bands give rise to a depth concept that
  is new both for real-valued and vector-valued functions.}

\section{Introduction}
\label{sec:introduction}

The statistical concept of \emph{depth} is well known for random
vectors in the Euclidean space. It describes the relative position of
$x$ from $\mathbb{R}^d$ with respect to a probability distribution on $\mathbb{R}^d$
or with respect to a sample $x_1,\dots,x_n\in\mathbb{R}^d$ from it. Given a
centrally symmetric distribution (for an appropriate notion of
symmetry), the point of central symmetry is the \emph{deepest} point
(center of the distribution), while the depth of outward points is
low.  The concept of depth has been used in the context of
trimming multivariate data, to derive depth-based estimators
(e.g. depth-weighted $L$-estimators or ranks based on the
center-outward ordering induced by the depth), to assess robustness of
statistical procedures, and for classification purposes, to name a few
areas, see \cite{cas10,liu:par:sin99,zuo:ser00a} for extensive
surveys and further references.

Often, the relative position of a point $x$ with respect to a sample
is defined with respect to the convex hull of the sample or a part of
the sample. For instance, the classical concept of the simplicial
depth appears as the fraction of $(d+1)$-tuples of sampled points
whose convex hull contains $x$, see \cite{liu90}. Its population
version is given by the probability that $x$ is contained in the convex
hull of $(d+1)$ i.i.d. copies of the random vector.

In high-dimensional spaces the curse of dimensionality comes into play
and the convex hull of a finite set of sampled
points forms a rather ``thin'' set and so it is very unlikely to
expect that many points belong to it. Even the convex hull of the
whole sample becomes rather small if the space dimension $d$ is much
larger than the sample size $n$. The situation is even worse for
infinite-dimensional spaces that are typical in functional data
analysis. In view of this, a direct generalisation of the simplicial
depth and convex hull depth concepts leads to the situation where most
points in the space have depth zero, see also \cite{kuel:zin13}, who
discuss problems inherent with the half-space depth in
infinite-dimensional spaces, most importantly zero depth and the lack
of consistency, see also \cite{niet:bat16}.

One possible way to overcome such difficulties is to consider the
depth for the collection of function values at any given time argument
value $t$ and then integrate (maybe weightedly) over the argument
space. This idea goes back to \cite{fraim:mun01} and has been further
studied in \cite{claes:hub:slaet:14,nag17}.

Another approach is based on considering the position of a function
relative to the band generated by functions from the sample. The band
generated by real-valued functions is defined as the interval-valued
function determined by the pointwise minimum and maximum of the
functions from the sample. The corresponding band depth has been
studied in \cite{lop:rom09,lop:rom11}. In the multivariate case the
band becomes a set-valued function that at each point equals the
convex hull of the values of functions from the sample, see
\cite{lop:sun:lin:gen14}.  Another multivariate generalisation of the
band depth in \cite{iev:pag13} is based on taking convex combinations
of band depths associated to each component. Yet another multivariate
functional depth concept was studied in \cite{claes:hub:slaet:14} by
integrating the half-space depth over the time domain, see also
\cite{chak:chaud14}. It is argued in \cite{iev:pag13} that the
multivariate setting makes it possible to incorporate other functional data parameters, such as derivatives, into the sample.
It is also possible to combine a function with its
smoothed version, possibly with different bandwidths.

In this paper we suggest a new concept of multivariate functional
depth based on taking convex hulls of the functions' values at
$m\geq1$ time points combined to build a new higher-dimensional
vector. In a sense, this concept pulls together values of the function
at different points and so naturally incorporates the time dependency
effects, and so better reflects the shape of curves.  Two examples at
which these $m$-band depths are used are presented.

The constructions described in Section~\ref{sec:band-depths} remind
very much the conventional simplicial band depth, where the main point
is to check if a point belongs to the convex hull of a subsample. The
underlying convex hull in the functional space is replaced by the
band, as in \cite{lop:rom09}. It is shown that the introduced band
depth satisfies the main properties described in
\cite{claes:hub:slaet:14,lop:sun:lin:gen14}. The theoretical
computation of the $m$-band depth is usually unfeasible, since it
requires computing the probabilities that a point belongs to a convex
hull of random points. Still, its empirical variant is consistent
and rather easy to compute.

\section{Regions formed by samples in functional spaces}
\label{sec:regi-form-sampl}

\subsubsection*{$m$-bands}

Let $\mathbb{E}$ be a linear space of functions $f:T\mapsto\mathbb{R}^d$ whose
argument $t$ belongs to a rather general topological space $T$. For
example, $\mathbb{E}$ may be the family of continuous functions on an
interval $T$ or a collection of $d$-vectors if $T$ is a finite set.

Consider functions $f_1,\dots,f_j\in\mathbb{E}$. The convex hull
$\operatorname{conv}(f_1,\dots,f_j)$ of these functions is the family of
functions $f\in\mathbb{E}$ that can be represented as
\begin{displaymath}
  f(t)=\sum_{i=1}^j\lambda_i f_i(t)\,,\quad t\in T\,,
\end{displaymath}
for some non-negative constants $\lambda_1,\dots,\lambda_j$ that sum
up to one.

If the coefficients $\lambda_1,\dots,\lambda_j$ are allowed to be
arbitrary functions of $t$, we arrive at the family of functions
$f\in\mathbb{E}$ such that, for all $t\in T$, the value $f(t)$ belongs to the
convex hull of $f_1(t),\dots,f_j(t)$. Following \cite{lop:rom09,lop:sun:lin:gen14} for univariate (resp. multivariate) functions, the
set of such functions is called the \emph{band} generated by
$f_1,\dots,f_j$ and is denoted by $\operatorname{band}(f_1,\dots,f_j)$. It is
obvious that
\begin{displaymath}
  \operatorname{conv}(f_1,\dots,f_j)\subset\operatorname{band}(f_1,\dots,f_j).
\end{displaymath}
If $d=1$ (as in \cite{lop:rom09}), then $\operatorname{band}(f_1,\dots,f_j)$
consists of all functions $f$ such that
\begin{equation}
  \label{eq:b1}
  \min_{i=1,\dots,j} f_i(t)\leq f(t)\leq \max_{i=1,\dots,j}
  f_i(t)\,,\quad t\in T\,.
\end{equation}
In order to obtain a set of functions with interior points, one should
avoid the case when the convex hull of $f_1(t),\dots,f_j(t)$ is of a
lower dimension than $d$ at some $t$. In particular, for this $j$
should be greater than $d$.

We define nested families of functions that lie between the band and
the convex hull generated by the sample.

\begin{definition}
  \label{def:1}
  The $m$-band, $\operatorname{band}_m(f_1,\dots,f_j)$, generated by
  $f_1,\dots,f_j\in\mathbb{E}$ is the family of functions $f\in\mathbb{E}$ such
  that, for all $t_1,\dots,t_m\in T$, the vector
  $(f(t_1),\dots,f(t_m))$ belongs to the convex hull of
  $\{(f_i(t_1),\dots,f_i(t_m)), i=1,\dots,j\}$, i.e.
  \begin{equation}
    \label{eq:mband}
    (f(t_1),\dots,f(t_m))
    =\sum_{i=1}^j \lambda_i (f_i(t_1),\dots,f_i(t_m))
  \end{equation}
  for non-negative real numbers $\lambda_1,\dots,\lambda_j$ that sum
  up to one and may depend on $(t_1,\dots,t_m)$.
\end{definition}

\begin{example}[Special cases]
  \label{ex:special}
  If $T=\{t\}$ is a singleton, the functions become vectors in $\mathbb{R}^d$
  and the $m$-band is their convex hull for all $m\geq1$.

  If $T$ is a finite set of cardinality $k$ and $d=1$, then the
  functions $f_1,\dots,f_j$ of $t\in T$ can be viewed as vectors
  $x_i=(x_{i1},\dots,x_{ik})\in\mathbb{R}^k$, $i=1,\dots,j$. The 1-band is the
  smallest hyperrectangle that contains $x_1,\dots,x_j$, which is
  given by $\times[a_l,b_l]$ for $a_l=\min(x_{il},i=1,\dots,j)$ and
  $b_l=\max(x_{il},i=1,\dots,j)$ for $l=1,\dots,k$. The 2-band is
  obtained as the largest set such that its projections on each 2-dimensional
  coordinate plane equals the projection of the convex hull of
  $x_1,\dots,x_k$. The $k$-band coincides with the convex hull of
  $x_1,\dots,x_j$.
\end{example}

If $m=1$ and $d=1$, then we recover the band introduced in
\cite{lop:rom09} and given by \eqref{eq:b1}, so that
$\operatorname{band}(f_1,\dots,f_j)=\operatorname{band}_1(f_1,\dots,f_j)$.

If $f\in\operatorname{band}_m(f_1,\dots,f_j)$, then each convex combination of the
values for $f_1,\dots,f_j$ and $f$ can be written as a convex
combination of the values of $f_1,\dots,f_j$ and so
\begin{displaymath}
  \operatorname{band}_m(f_1,\dots,f_j)=\operatorname{band}_m(f_1,\dots,f_j,f)\,.
\end{displaymath}

The $m$-band is additive with respect to the Minkowski (elementwise)
addition. In particular,
\begin{equation}
  \label{eq:g-plus}
  \operatorname{band}_m(g+f_1,\dots,g+f_j)=g+\operatorname{band}_m(f_1,\dots,f_j)
\end{equation}
for all $g\in\mathbb{E}$.  The $m$-band is equivariant with respect to linear
transformations, that is,
\begin{equation}
  \label{eq:affine}
  \operatorname{band}_m(Af_1,\dots,Af_j)=\{Af:\,f\in\operatorname{band}_m(f_1,\dots,f_j)\}
\end{equation}
for all $A:T\mapsto \mathbb{R}^{d\times d}$ with $A(t)$ nonsingular for all
$t\in T$. If all functions generating an $m$-band are affected by the
same phase variation, the phase of the $m$-band is affected as shown
below,
\begin{equation}
  \label{eq:phase-variation}
  \operatorname{band}_m(f_1\circ h,\dots,f_j\circ h)
  =\{f\circ h:\,f\in\operatorname{band}_m(f_1,\dots,f_j)\}
\end{equation}
for any bijection $h:T\mapsto T$.  If $d=1$ and $\mathbb{E}$ consists of
continuously differentiable functions on $T=\mathbb{R}$, then
$f\in\operatorname{band}_m(f_1,\dots,f_j)$ yields that $f'$ belongs to
$\operatorname{band}_{m-1}(f'_1,\dots,f'_j)$. This can be extended for higher
derivatives.

It is obvious that $\operatorname{band}_m(f_1,\dots,f_j)$ is a convex subset of
$\mathbb{E}$; since the points $t_1,\dots,t_m$ in Definition~\ref{def:1}
are not necessarily distinct, it decreases if $m$ grows. The following
result shows that the $m$-band turns into the convex hull for large $m$.

\begin{proposition}
  Assume that 
  all functions from $\mathbb{E}$
  are jointly separable, that is there exists a countable set $Q\subset T$
  such that, for all $f\in\mathbb{E}$ and $t\in T$, $f(t)$ is the limit of
  $f(t_n)$ for $t_n\in Q$ and $t_n\to t$.
  Then, for each $f_1,\dots,f_j\in\mathbb{E}$,
  \begin{displaymath}
    \operatorname{band}_m(f_1,\dots,f_j)\downarrow \operatorname{conv}(f_1,\dots,f_j)\qquad
    \text{as}\; m\to\infty.
  \end{displaymath}
\end{proposition}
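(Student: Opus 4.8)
The plan is to exploit the two facts already recorded in the excerpt, namely that $\operatorname{conv}(f_1,\dots,f_j)\subset\operatorname{band}_m(f_1,\dots,f_j)$ for every $m$ and that the bands decrease as $m$ grows (because the arguments $t_1,\dots,t_m$ in Definition~\ref{def:1} need not be distinct). Together these give $\operatorname{conv}(f_1,\dots,f_j)\subset\bigcap_{m\ge1}\operatorname{band}_m(f_1,\dots,f_j)$ and reduce the claim to the reverse inclusion: every $f$ lying in all $m$-bands is a \emph{fixed} convex combination of $f_1,\dots,f_j$. The only genuine difficulty is that membership in $\operatorname{band}_m$ supplies coefficients $\lambda_1,\dots,\lambda_j$ that are allowed to depend on the chosen time points, whereas $\operatorname{conv}$ requires one coefficient vector valid at every $t$ simultaneously.

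First I would fix $f\in\bigcap_{m}\operatorname{band}_m(f_1,\dots,f_j)$ and enumerate the countable dense set as $Q=\{q_1,q_2,\dots\}$. Applying Definition~\ref{def:1} to the particular $m$-tuple $(q_1,\dots,q_m)$ yields, for each $m$, a coefficient vector $\lambda^{(m)}=(\lambda^{(m)}_1,\dots,\lambda^{(m)}_j)$ in the unit simplex $S=\{\lambda\in\mathbb{R}^j:\lambda_i\ge0,\ \sum_{i}\lambda_i=1\}$ such that $f(q_l)=\sum_{i=1}^j\lambda^{(m)}_i f_i(q_l)$ for $l=1,\dots,m$. Since $S$ is compact, the sequence $(\lambda^{(m)})_m$ has a subsequence converging to some $\lambda^\ast\in S$; \textbf{this compactness step is the heart of the argument}, as it manufactures a single universal coefficient vector out of the $m$-dependent ones. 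Note that only one convergent subsequence is needed, so no diagonal extraction is required.

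Finally I would pass to the limit. For each fixed $l$ the identity $f(q_l)=\sum_i\lambda^{(m)}_i f_i(q_l)$ holds for all sufficiently large $m$ along the subsequence, so letting $m\to\infty$ (a finite sum, so the limit passes inside) gives $f(q_l)=\sum_i\lambda^\ast_i f_i(q_l)$, i.e. $f=\sum_i\lambda^\ast_i f_i$ on all of $Q$. Joint separability then closes the gap: for arbitrary $t\in T$ choose $t_n\in Q$ with $t_n\to t$, whence $f_i(t_n)\to f_i(t)$ and $f(t_n)\to f(t)$, and passing to the limit in $f(t_n)=\sum_i\lambda^\ast_i f_i(t_n)$ yields $f(t)=\sum_i\lambda^\ast_i f_i(t)$. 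Thus $f\in\operatorname{conv}(f_1,\dots,f_j)$, which establishes the reverse inclusion and hence the claimed monotone convergence.
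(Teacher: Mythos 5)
Your proposal is correct and follows essentially the same route as the paper's own proof: the paper likewise restricts attention to an increasing family of finite subsets $T_n\uparrow Q$ (your initial segments $(q_1,\dots,q_m)$ of an enumeration of $Q$), extracts a convergent subsequence of the coefficient vectors in the compact simplex, passes to the limit on each finite stage using the nesting of the sets, and concludes via the joint separability assumption. The only difference is presentational — you make explicit the compactness of the simplex and the reduction to the reverse inclusion, which the paper leaves implicit.
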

\begin{proof}
  Consider an increasing family $T_n$ of finite subsets of $T$
  such that $T_n\uparrow Q$ and a certain function $f\in\mathbb{E}$.
  If $m_n$ is the cardinality of $T_n$, and $f$ belongs to the
  $m_n$-band of $f_1,\dots,f_j$, then the values $(f(t),t\in T_n)$
  equal a convex combination of $(f_i(t),t\in T_n)$, $i=1,\dots,j$,
  with coefficients $\lambda_{ni}$. By passing to a subsequence,
  assume that $\lambda_{ni}\to\lambda_i$ as $n\to\infty$ for all
  $i=1,\dots,j$. Using the nesting property of $T_n$, we obtain that
  \begin{displaymath}
    f(t)=\sum \lambda_i f_i(t)\,,\quad t\in T_n\,.
  \end{displaymath}
  Now it suffices to let $n\to\infty$ and appeal to the separability
  of $f$.
\end{proof}

Moreover, under a rather weak assumption, the $m$-band coincides with
the convex hull for sufficiently large $m$.  A set of points in the
$d$-dimensional Euclidean space is said to be in general position if
no $(d-1)$-dimensional hyperplane contains more than $d$ points. In
particular, if the set contains at most $d+1$ points, they will be in
general position if and only if they are all extreme points of their
convex hull, equivalently, any point from their convex hull is
obtained as their unique convex combination.

\begin{proposition}
  \label{prop:m-large-gen}
  If $j\leq d(m-1)+1$ and there exists $t_1,\dots,t_{m-1}\in T$ such
  that the vectors $(f_i(t_1),\dots,f_i(t_{m-1}))\in\mathbb{R}^{d(m-1)}$,
  $i=1,\dots,j$, are in general position, then
  \begin{displaymath}
    \operatorname{band}_m(f_1,\dots,f_j)=\operatorname{conv}(f_1,\dots,f_j)\,.
  \end{displaymath}
\end{proposition}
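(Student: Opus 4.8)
The plan is to establish the nontrivial inclusion $\operatorname{band}_m(f_1,\dots,f_j)\subset\operatorname{conv}(f_1,\dots,f_j)$, since the reverse inclusion has already been noted. The whole argument rests on turning the general position hypothesis into a uniqueness statement. Writing $v_i=(f_i(t_1),\dots,f_i(t_{m-1}))\in\mathbb{R}^{d(m-1)}$, the assumption $j\le d(m-1)+1$ means we have at most $d(m-1)+1$ points in a space of dimension $d(m-1)$, so by the characterisation of general position recalled just before the statement (with $d$ replaced by $d(m-1)$), the $v_i$ are the extreme points of their convex hull and, crucially, every point of $\operatorname{conv}(v_1,\dots,v_j)$ admits a unique representation as a convex combination of the $v_i$.

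First I would fix $f\in\operatorname{band}_m(f_1,\dots,f_j)$ and produce a candidate set of coefficients. Applying the defining property of the $m$-band to the $m$-tuple $(t_1,\dots,t_{m-1},t_1)$ (the points need not be distinct) and projecting onto the first $d(m-1)$ coordinates shows that $(f(t_1),\dots,f(t_{m-1}))$ lies in $\operatorname{conv}(v_1,\dots,v_j)$. By the uniqueness just established, there are unique $\lambda_1,\dots,\lambda_j\ge0$ with $\sum_i\lambda_i=1$ and $(f(t_1),\dots,f(t_{m-1}))=\sum_i\lambda_i v_i$. These $\lambda_i$ are the coefficients I will use throughout.

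The key step is to show that the same coefficients work at every $t\in T$. For an arbitrary $t$, I would apply the $m$-band condition to the $m$-tuple $(t_1,\dots,t_{m-1},t)$, obtaining coefficients $\lambda_1(t),\dots,\lambda_j(t)$ with $(f(t_1),\dots,f(t_{m-1}),f(t))=\sum_i\lambda_i(t)(f_i(t_1),\dots,f_i(t_{m-1}),f_i(t))$. Projecting onto the first $d(m-1)$ coordinates gives $\sum_i\lambda_i(t)v_i=(f(t_1),\dots,f(t_{m-1}))$, so by uniqueness $\lambda_i(t)=\lambda_i$ for every $i$. Reading off the last $d$ coordinates then yields $f(t)=\sum_i\lambda_i f_i(t)$, with the $\lambda_i$ independent of $t$; hence $f\in\operatorname{conv}(f_1,\dots,f_j)$.

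The only delicate point is the reduction of general position to the uniqueness of the convex representation: one must verify that at most $d(m-1)+1$ points in general position in $\mathbb{R}^{d(m-1)}$ are affinely independent, so that the barycentric coordinates of any point of their convex hull are unique. Once this is in hand the proof is essentially a projection argument, and the hypothesis $j\le d(m-1)+1$ is exactly what guarantees the uniqueness that propagates the fixed coefficients from the anchor points $t_1,\dots,t_{m-1}$ to all of $T$.
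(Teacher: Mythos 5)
Your proof is correct and follows essentially the same route as the paper's: fix the unique convex representation of $(f(t_1),\dots,f(t_{m-1}))$ guaranteed by general position, then use the $m$-band condition at $(t_1,\dots,t_{m-1},t)$ together with uniqueness to propagate the same coefficients $\lambda_i$ to every $t\in T$. Your write-up merely makes explicit two points the paper leaves implicit (the projection onto the first $d(m-1)$ coordinates and the equivalence of general position with unique barycentric coordinates for at most $d(m-1)+1$ points), which is fine.
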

\begin{proof}
  Let $f\in\operatorname{band}_m(f_1,\dots,f_j)$.  In view of \eqref{eq:mband},
  $(f(t_1),\dots,f(t_{m-1}))$ equals a convex combination of
  $(f_i(t_1),\dots,f_i(t_{m-1}))$, $i=1,\dots,j$, which is unique by
  the general position condition. By considering an arbitrary
  $t_m\in T$, we see that $f(t_m)$ is obtained by the same convex
  combination, so that $f$ is a convex combination of functions
  $f_1,\dots,f_j$.
\end{proof}

In particular, if $d=1$, then the $2$-band of two functions coincides
with their convex hull. It suffices to note that if $f_1$
and $f_2$ are not equal, then $f_1(t_1)$ and $f_2(t_1)$
are different for some $t_1$ and so are in general position. The same holds for any dimension $d\geq 2$.

\begin{example}[Linear and affine functions]
  \label{ex:lin}
  Let $f_1,\dots,f_j$ be constant functions. Then their 1-band is the
  collection of functions lying between the maximum and minimum values
  of $f_1,\dots,f_j$. The 2-band consists of constant functions only
  and coincides with the convex hull. Together with (\ref{eq:g-plus}),
  this implies that the $2$-band generated by functions
  $f_i(t)=a(t)+b_i$, $i=1,\dots,j$, is the set of functions $a(t)+b$
  for $b$ from the convex hull of $b_1,\dots,b_j$.

  If $f_i(t)=a_it+b_i$, $i=1,\dots,j$, are affine functions of $t\in \mathbb{R}$,
  then their 3-band consists of affine functions only and also equals
  the convex hull. Indeed,
  \begin{displaymath}
    (f(t_1),f(t_2),f(t_3))=\sum \lambda_i (a_i(t_1,t_2,t_3)+b_i(1,1,1))
  \end{displaymath}
  yields that
  \begin{displaymath}
    \frac{f(t_3)-f(t_1)}{f(t_2)-f(t_1)}
    =\frac{t_3-t_1}{t_2-t_1}\,.
  \end{displaymath}
  Therefore each $f$ from $\operatorname{band}_3(f_1,\dots,f_j)$ is an affine function.
\end{example}

\begin{example}[Monotone functions]
  Let $d=1$ and let $f_1,\dots,f_j$ be non-decreasing (respectively
  non-increasing) functions. Then their 2-band is a collection of
  non-decreasing (resp. non-increasing) functions.
  If all functions $f_1,\dots,f_j$ are convex (resp. concave),
  then their 3-band is a collection of convex (resp. concave)
  functions.
\end{example}

\begin{remark}
  The definition of the $m$-band can be easily extended for subsets
  $F$ of a general topological linear space $\mathbb{E}$. Consider a certain
  family of continuous linear functionals $u_t$, $t\in T$. An element
  $x\in\mathbb{E}$ is said to belong to the $m$-band of $F$ if for each
  $t_1,\dots,t_m\in T$, the vector $(u_{t_1}(x),\dots,u_{t_m}(x))$
  belongs to the convex hull of $\{(u_{t_1}(y),\dots,u_{t_m}(y)):\;
  y\in F\}$. Then Definition~\ref{def:1} corresponds to the case of
  $\mathbb{E}$ being a functional space and $u_t(f)=f(t)$ for $t\in T$.

  While the conventional closed convex hull arises as the intersection
  of all closed half-spaces that contain a given set, its $m$-band
  variant arises from the intersection of half spaces determined by
  the the chosen functionals $u_t$ for $t\in T$.
\end{remark}

\subsubsection*{Space reduction and time share}

The $m$-band reduces to a $1$-band by defining functions on the
product space $T^m$.

\begin{proposition}
  \label{prop:cp}
  For each $j$, the $m$-band $\operatorname{band}_m(f_1,\dots,f_j)$ coincides with
  $\operatorname{band}(f^{(m)}_1,\dots,f^{(m)}_j)$, where
  $f^{(m)}_i:T^m\mapsto(\mathbb{R}^d)^m$ is defined as
  \begin{displaymath}
    f^{(m)}(t_1,\dots,t_m)=(f(t_1),\dots,f(t_m))\,.
  \end{displaymath}
\end{proposition}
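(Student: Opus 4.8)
The plan is to read off both bands as conditions indexed by the points of $T^m$ and to observe that, once a function $f\in\mathbb{E}$ is identified with its lift $f^{(m)}$, the two conditions are verbatim the same. First I would make the identification precise: the map $f\mapsto f^{(m)}$ sends $\mathbb{E}$ into the linear space of functions $T^m\mapsto(\mathbb{R}^d)^m$, and it is injective, since $f$ is recovered from the diagonal values $f^{(m)}(t,\dots,t)=(f(t),\dots,f(t))$. I would take the ambient function space on $T^m$ to be $\mathbb{E}^{(m)}=\{f^{(m)}:f\in\mathbb{E}\}$, so that $f\mapsto f^{(m)}$ is a linear bijection of $\mathbb{E}$ onto $\mathbb{E}^{(m)}$; this is what makes the word ``coincides'' literal rather than merely ``is in bijection with''.

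Next I would unfold the $1$-band (that is, $\operatorname{band}=\operatorname{band}_1$) on the base space $T^m$. By the defining property of the band, $f^{(m)}\in\operatorname{band}(f^{(m)}_1,\dots,f^{(m)}_j)$ holds precisely when, for every point $s\in T^m$, the value $f^{(m)}(s)$ lies in the convex hull of $\{f^{(m)}_i(s):i=1,\dots,j\}$. Writing $s=(t_1,\dots,t_m)$ and substituting the definition of the lift gives $f^{(m)}(s)=(f(t_1),\dots,f(t_m))$ and $f^{(m)}_i(s)=(f_i(t_1),\dots,f_i(t_m))$.

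The crux is then a direct comparison. The displayed condition requires that $(f(t_1),\dots,f(t_m))$ belong to the convex hull of $\{(f_i(t_1),\dots,f_i(t_m)):i=1,\dots,j\}$ for all $(t_1,\dots,t_m)\in T^m$, which is exactly the requirement in Definition~\ref{def:1} that $f\in\operatorname{band}_m(f_1,\dots,f_j)$; the convex weights $\lambda_i$ depending on $(t_1,\dots,t_m)$ in \eqref{eq:mband} are precisely the point-dependent weights in the band on $T^m$. Hence $f\in\operatorname{band}_m(f_1,\dots,f_j)$ if and only if $f^{(m)}\in\operatorname{band}(f^{(m)}_1,\dots,f^{(m)}_j)$, and under the bijection above the two families of functions coincide.

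I do not expect a genuine obstacle here: the statement is essentially a matter of matching the two definitions. The one point that deserves care is the choice of ambient space on $T^m$. If one allowed \emph{all} functions $T^m\mapsto(\mathbb{R}^d)^m$, the band $\operatorname{band}(f^{(m)}_1,\dots,f^{(m)}_j)$ would also contain elements whose coordinate blocks fail to depend on the single coordinates $t_\ell$ separately, because the weights $\lambda_i$ may vary with the whole tuple $(t_1,\dots,t_m)$; such elements are not lifts $f^{(m)}$ of any $f\in\mathbb{E}$. Restricting the ambient space to $\mathbb{E}^{(m)}$, as above, removes this discrepancy and yields the asserted equality.
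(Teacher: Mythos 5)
Your proof is correct and follows essentially the same route as the paper's: the paper's one-line argument is precisely your observation that $f^{(m)}(t_1,\dots,t_m)$ lies in the convex hull of the $f^{(m)}_i(t_1,\dots,t_m)$ if and only if $(f(t_1),\dots,f(t_m))$ lies in the convex hull of the $(f_i(t_1),\dots,f_i(t_m))$, i.e.\ the two membership conditions coincide pointwise over $T^m$. Your additional remark about restricting the ambient space on $T^m$ to the image $\mathbb{E}^{(m)}$ of the lift is a sensible clarification of a point the paper leaves implicit, but it does not change the substance of the argument.
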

\begin{proof}
  It suffices to note that $f^{(m)}(t_1,\dots,t_m)$ belongs to the
  convex hull of $f^{(m)}_i(t_1,\dots,t_m)$, $i=1,\dots,j$, if and
  only if $(f(t_1),\dots,f(t_m))$ belongs to the convex hull of
  $(f_i(t_1),\dots,f_i(t_m))$, $i=1,\dots,j$.
\end{proof}

In the framework of Proposition~\ref{prop:cp}, it is possible to
introduce further bands (called \emph{space-reduced}) by
restricting the functions $f^{(m)}_i$ to a subset $S$ of $T^m$. For
instance, the 1-band generated by functions $f^{(2)}_1,\dots,f^{(2)}_j$
for the arguments $(t_1,t_2)\in\mathbb{R}^2$ such that $|t_1-t_2|=h$ describes the
joint behaviour of the values of functions separated by the lag
$h$. If $m=1$, then the space reduction is equivalent to restricting
the parameter space, which can be useful, e.g. for discretisation
purposes.

It is possible to quantify the closedness of $f$ to the band by
determining the proportion of the $m$-tuple of time values from $T^m$
when the values of $f$ belong to the band.  Define the $m$-band
time-share as
\begin{multline*}
  \operatorname{TS}_m(f;f_1,\dots,f_j)\\=\{(t_1,\dots,t_m)\in T^m:\;
  f^{(m)}(t_1,\dots,t_m)\in\operatorname{conv}(\{f_i^{(m)}(t_1,\dots,t_m)\}_{i=1}^j)\,.
\end{multline*}
If the functions take values in $\mathbb{R}$, then
$\operatorname{TS}_1(f;f_1,\dots,f_j)$ turns into the modified band depth
defined in \cite[Sec.~5]{lop:rom09}.  If $f$ belongs to the $m$-band
of $f_1,\dots,f_j$, then $\operatorname{TS}_m(f;f_1,\dots,f_j)=T^m$, while if
$f$ belongs to the $1$-band of $f_1,\dots,f_j$, then
$\{(t,\dots,t):\;t\in T\}\subset\operatorname{TS}_m(f;f_1,\dots,f_j)$.
It is also straightforward to incorporate
the space reduction by replacing $T^m$ with a subset $S$.

\section{Simplicial-type band depths}
\label{sec:band-depths}

\subsubsection*{Band depth}

In the following,  we consider the event that a function $f$ belongs to
a band generated by i.i.d. random functions $\xi_1,\dots,\xi_j$ with
the common distribution $P$. The $m$-band depth of the function $f$
with respect to $P$ is defined by
\begin{multline}
\label{eq:bdf}
  \operatorname{bd}_m^{(j)}(f;P)={\mathbf P}\{f\in\operatorname{band}_m(\xi_1,\dots,\xi_j)\}\\
  ={\mathbf P}\{(f(t_1),\ldots,f(t_m))\in \operatorname{conv}(\{(\xi_i(t_1),\dots,\xi_i(t_m))\}_{i=1}^j)\,,\forall t_1,\ldots,t_m\in T\}\,.
\end{multline}
If $m$ increases, then the $m$-band narrows, and so the $m$-band
depth decreases.

We recall that when $d=1$ the $1$-band coincides with the band
introduced in \cite{lop:rom09}. Nevertheless the band depth defined in
\cite{lop:rom09} is the sum of $\operatorname{bd}_m^{(j)}(f;P)$ with $j$ ranging
from $2$ to a fixed value $J$. The same construction can be applied to our
$m$-bands.

The $m$-band depth of $f$ is influenced by the choice of $j$, and it
increases with $j$. Unlike to the finite-dimensional setting,
where $j$ is typically chosen as the dimension of the space plus one
\cite{liu90}, there is no canonical choice of $j$ for the functional
spaces. In order to ensure that the $m$-band generated by
$\xi_1,\dots,\xi_j$ differs from the convex hull, it is essential to
choose $j$ sufficiently large, and in any case at least $d(m-1)+2$, see
Proposition~\ref{prop:m-large-gen}. Furthermore, we must impose stronger conditions on $j$ to avoid the zero-depth problem.

\begin{proposition}
  If $j\leq dm$ and the joint distribution of the marginals of $P$ at some fixed $m$ time points is absolutely continuous, then  $\operatorname{bd}_m^{(j)}(\cdot;P)=0$\,.
\end{proposition}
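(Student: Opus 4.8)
The plan is to collapse the infinite-dimensional band-membership event into a single finite-dimensional one and then run a dimension count. Fix an arbitrary $f\in\mathbb{E}$, and let $s_1,\dots,s_m\in T$ be the time points at which the joint law of $(\xi(s_1),\dots,\xi(s_m))$ under $P$ is absolutely continuous on $\mathbb{R}^{dm}$. Writing $Y_i=(\xi_i(s_1),\dots,\xi_i(s_m))\in\mathbb{R}^{dm}$ and $y=(f(s_1),\dots,f(s_m))\in\mathbb{R}^{dm}$, Definition~\ref{def:1} applied at the particular arguments $t_1=s_1,\dots,t_m=s_m$ shows that $f\in\operatorname{band}_m(\xi_1,\dots,\xi_j)$ forces $y\in\operatorname{conv}(Y_1,\dots,Y_j)$. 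Hence
\[
  \operatorname{bd}_m^{(j)}(f;P)\leq{\mathbf P}\{y\in\operatorname{conv}(Y_1,\dots,Y_j)\}
  \leq{\mathbf P}\{y\in\operatorname{aff}(Y_1,\dots,Y_j)\},
\]
where $Y_1,\dots,Y_j$ are i.i.d. with the absolutely continuous law above and $\operatorname{aff}$ denotes the affine hull. Since $f$ was arbitrary, it suffices to prove that the right-hand probability vanishes whenever $j\leq dm$.

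The heuristic is clear: $\operatorname{aff}(Y_1,\dots,Y_j)$ has dimension at most $j-1\leq dm-1$, hence it is a Lebesgue-null subset of $\mathbb{R}^{dm}$ that a fixed $y$ ought to avoid. The point requiring care, and the one I expect to be the main obstacle, is that this affine subspace is itself random, so one cannot simply invoke ``a fixed point lies in a null set with probability zero''. I would instead argue by induction on $k$ that ${\mathbf P}\{y\in\operatorname{aff}(Y_1,\dots,Y_k)\}=0$ for every fixed $y$ and every $1\leq k\leq dm$. The base case $k=1$ is immediate: $\operatorname{aff}(Y_1)=\{Y_1\}$ and $Y_1$ is absolutely continuous, so ${\mathbf P}\{y=Y_1\}=0$.

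For the inductive step I would condition on $Y_1,\dots,Y_{k-1}$ and treat $Y_k$ as the only remaining randomness, setting $U=\operatorname{span}(Y_2-Y_1,\dots,Y_{k-1}-Y_1)$. If $y-Y_1\in U$, i.e. $y\in\operatorname{aff}(Y_1,\dots,Y_{k-1})$, then $y\in\operatorname{aff}(Y_1,\dots,Y_k)$ automatically, but this degenerate event carries probability zero by the induction hypothesis applied to the $k-1$ points (note $1\le k-1\le dm$). Otherwise $y-Y_1\notin U$, and a short computation writing $y-Y_1=u+c(Y_k-Y_1)$ with $u\in U$ shows that membership $y\in\operatorname{aff}(Y_1,\dots,Y_k)$ forces $c\neq0$ and hence $Y_k\in Y_1+\operatorname{span}(U,\,y-Y_1)$, an affine subspace of dimension at most $(k-2)+1=k-1\leq dm-1<dm$. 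Being a proper affine subspace it is Lebesgue-null, so by absolute continuity the conditional probability that $Y_k$ lands in it is zero. Integrating over $Y_1,\dots,Y_{k-1}$ yields ${\mathbf P}\{y\in\operatorname{aff}(Y_1,\dots,Y_k)\}={\mathbf P}\{y\in\operatorname{aff}(Y_1,\dots,Y_{k-1})\}=0$, which closes the induction and gives $\operatorname{bd}_m^{(j)}(\cdot;P)=0$. The whole difficulty is concentrated in the degenerate case $y\in\operatorname{aff}(Y_1,\dots,Y_{k-1})$, which is exactly what the induction is designed to absorb.
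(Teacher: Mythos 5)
Your proof is correct and takes essentially the same route as the paper: both reduce band membership to the event that the fixed vector $(f(s_1),\dots,f(s_m))\in\mathbb{R}^{dm}$ lies in the convex hull of the $j\leq dm$ i.i.d.\ absolutely continuous vectors $(\xi_i(s_1),\dots,\xi_i(s_m))$, which has probability zero. The only difference is that the paper simply asserts this last zero-probability fact, while you supply its proof (passing to affine hulls and inducting with a conditioning argument), which is a correct and welcome filling-in of the detail the paper leaves implicit.
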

\begin{proof}
   If $j\leq dm$ and $\{(\xi_i(t_1),\dots,\xi_i(t_m))\}_{i=1}^j$ are independent and absolutely continuous in $\mathbb{R}^{dm}$, the probability that any fixed $x\in\mathbb{R}^{dm}$ lies in their convex hull is zero.
\end{proof}

A theoretical calculation of the $m$-band depth given by
\eqref{eq:bdf} is not feasible in most cases. In applications, it can
be replaced by its empirical variant defined in exactly the same way
as in \cite{lop:rom09} for the 1-band case. Let $f_1,\dots,f_n$ be a
sample from $P$. Fix any $j\in\{dm+1,\dots,n\}$ and define
\begin{displaymath}
  \operatorname{bd}_m^{(j)}(f;f_1,\dots,f_n)
  ={{n}\choose{j}}^{-1}\sum_{1\leq i_1<\cdots<i_j\leq n}
  {\mathbf 1}_{f\in\operatorname{band}_m(f_{i_1},\dots,f_{i_j})}\,,
\end{displaymath}
so that $\operatorname{bd}_m^{(j)}(f;f_1,\dots,f_n)$ is the proportion of $j$-tuples
from $f_1,\dots,f_n$ such that $f$ lies in the $m$-band generated by
the $j$-tuple.  The choice of $j$ affects the results. It is
computationally advantageous to keep $j$ small, while it is also
possible to sum up the depths over a range of the values for $j$, as
 in \cite{lop:rom09}.

\subsubsection*{Time-share depth}

Assume now that $T$ is equipped with a probability measure $\mu$, for
example, the normalised Lebesgue measure in case $T$ is a bounded
subset of the Euclidean space or the normalised counting measure if
$T$ is discrete. Extend $\mu$ to the product measure $\mu^{(m)}$ on
$T^m$. Define the time-share depth by
\begin{displaymath}
  \operatorname{td}_m^{(j)}(f;P) ={\mathbf E}\mu^{(m)}(\operatorname{TS}_m(f;\xi_1,\dots,\xi_j))\,.
\end{displaymath}
If $T$ is a subset of the Euclidean space, Fubini's Theorem yields
that the time-share depth is the average of the probability that
$(f(t_1),\dots,f(t_m))$ lies in the convex hull of $j$ points in
$\mathbb{R}^{dm}$,
\begin{multline}
  \label{eq:fubini}
  \operatorname{td}_m^{(j)}(f;P)=\int{\mathbf P}\{(f(t_1),\dots,f(t_m))\in\operatorname{conv}(\{(\xi_i(t_1),\dots,\xi_i(t_m))\}_{i=1}^j)\}\\
  {\rm d}\mu^{(m)}(t_1,\dots,t_m)\,.
\end{multline}

For any $j\in\{dm+1,\dots,n\}$, the empirical time-share depth is given by
\begin{displaymath}
  \operatorname{td}_m^{(j)}(f;f_1,\dots,f_n)
  ={{n}\choose{j}}^{-1}\sum_{1\leq i_1<\cdots<i_j\leq n}
  \mu^{(m)}(\operatorname{TS}_m(f;f_{i_1},\dots,f_{i_j}))\,,
\end{displaymath}

\begin{example}[Univariate case]
  Assume that $T$ is a singleton. Then necessarily $m=1$, the function
  $f$ is represented by a point $x$ in $\mathbb{R}^d$, and the band depth of
  $x$ for $j=d+1$ coincides with the simplicial depth, see
  \cite{liu90}.
\end{example}

\begin{example}
  \label{ex:a+x}
  Let $\xi(t)=a(t)+X$, $t\in T$, where $X$ is a random variable. Then
  $\operatorname{band}(\xi_1,\dots,\xi_j)$ for i.i.d. $\xi_i(t)=a(t)+X_i$,
  $i=1,\dots,j$, is the set of functions bounded above by $a(t)+\max
  X_i$ and below by $a(t)+\min X_i$. Then
  \begin{displaymath}
    \operatorname{bd}_1^{(j)}(a;P)=1-{\mathbf P}\{X>0\}^j-{\mathbf P}\{X<0\}^j.
  \end{displaymath}
  By Example~\ref{ex:lin},
  $\operatorname{band}_2(\xi_1,\dots,\xi_j)$ consists of functions $a(t)+b$ for the
  constant $b\in[\min X_i,\max X_i]$. Only such functions may
  have a positive $2$-band depth.
\end{example}

\begin{example}
  Let now $\xi(t)=a(t)+X$, where $a:T\to\mathbb{R}^d$ and $X$ is an absolutely
  continuous random vector in $\mathbb{R}^d$ which is angularly symmetric about the origin. Then
  \begin{equation}
    \label{eq:wendel}
    \operatorname{bd}_1^{(j)}(a;P)=1-2^{1-j}\sum_{i=0}^{d-1}{j-1\choose i}
  \end{equation}
  being the probability that the origin belongs to the convex hull of
  $X_1,\dots,X_j$, see \cite{wen62}.
\end{example}

\subsubsection*{Properties of the band depths}

\begin{theorem}
  For any $j\geq dm+1$ we have:
  \begin{enumerate}
  \item \emph{affine invariance.}
    $\operatorname{bd}_m^{(j)}(Af+g;P_{A,g})=\operatorname{bd}_m^{(j)}(f;P)$ and
    $\operatorname{td}_m^{(j)}(Af+g;P_{A,g})=\operatorname{td}_m^{(j)}(f;P)$ for all $g\in\mathbb{E}$ and
    $A:T\mapsto \mathbb{R}^{d\times d}$ with $A(t)$ nonsingular for $t\in T$.
  \item \emph{phase invariance.} $\operatorname{bd}_m^{(j)}(f\circ
    h;P^h)=\operatorname{bd}_m^{(j)}(f;P)$ for any one-to-one transformation
    $h:T\mapsto T$, where $P^h(F)=P(F\circ h^{-1})$ for any measurable
    subset $F$ of $\mathbb{E}$ when $h^{-1}$ is the inverse mapping of $h$.
  \item \emph{vanishing at infinity.}  $\operatorname{bd}_m^{(j)}(f;P)\to0$ if the
    supremum of $\|f\|$ over $T$ converges to infinity, and
    $\operatorname{td}_m^{(j)}(f;P)\to 0$ if the infimum of $\|f\|$ over $T$
    converges to infinity.
  \end{enumerate}
\end{theorem}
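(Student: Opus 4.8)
The theorem has three parts, so I'll sketch a proof plan for each.

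**Affine invariance.** This should follow almost directly from the equivariance properties established earlier, namely equations (eq:g-plus) and (eq:affine). The plan: given i.i.d. random functions $\xi_1,\dots,\xi_j$ with distribution $P$, the transformed functions $A\xi_i+g$ are i.i.d. with distribution $P_{A,g}$. I would apply (eq:affine) followed by (eq:g-plus) to write $\operatorname{band}_m(A\xi_1+g,\dots,A\xi_j+g)=g+\{A h:h\in\operatorname{band}_m(\xi_1,\dots,\xi_j)\}$, and then observe that the event $Af+g\in g+\{Ah:h\in\operatorname{band}_m(\xi_1,\dots,\xi_j)\}$ is equivalent to $f\in\operatorname{band}_m(\xi_1,\dots,\xi_j)$ because $A(t)$ is nonsingular pointwise. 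Matching the probabilities of equivalent events gives the identity for $\operatorname{bd}_m^{(j)}$. For $\operatorname{td}_m^{(j)}$ the same pointwise equivalence holds inside the definition of $\operatorname{TS}_m$, so the time-share sets coincide and the expected $\mu^{(m)}$-measures agree.

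**Phase invariance.** Here the plan relies on (eq:phase-variation). If $\xi_1,\dots,\xi_j$ have distribution $P$, then $\xi_i\circ h$ have distribution $P^h$. Using (eq:phase-variation), $\operatorname{band}_m(\xi_1\circ h,\dots,\xi_j\circ h)=\{g\circ h:g\in\operatorname{band}_m(\xi_1,\dots,\xi_j)\}$, and since $h$ is a bijection, $f\circ h$ lies in this set if and only if $f\in\operatorname{band}_m(\xi_1,\dots,\xi_j)$. Equating probabilities of these equivalent events yields the claim. (Note the statement only asserts phase invariance for $\operatorname{bd}_m^{(j)}$, presumably because $\operatorname{td}_m^{(j)}$ would additionally require $h$ to be $\mu^{(m)}$-measure preserving; I would not attempt to extend it.)

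**Vanishing at infinity.** This is the part I expect to be the main obstacle, since it is an analytic limit rather than an algebraic identity. For $\operatorname{bd}_m^{(j)}$, the idea is that if $\sup_{t\in T}\|f(t)\|\to\infty$, then for membership we need, at every $m$-tuple, the vector $(f(t_1),\dots,f(t_m))$ to sit inside the convex hull of the sampled values; taking $t_1=\dots=t_m=t$ on the diagonal reduces this to requiring $f(t)\in\operatorname{conv}(\xi_1(t),\dots,\xi_j(t))$ for every $t$, so $\operatorname{bd}_m^{(j)}(f;P)\le\operatorname{bd}_1^{(j)}(f;P)$. Thus it suffices to handle the $1$-band, where membership forces $\|f(t)\|\le\max_i\|\xi_i(t)\|$ for all $t$, hence $\sup_t\|f(t)\|\le\max_i\sup_t\|\xi_i(t)\|$; as the left side diverges, the probability that finitely many sample suprema all exceed a growing bound tends to zero. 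The delicate point is making this uniform—one should fix a tightness-type bound $R$ so that ${\mathbf P}\{\sup_t\|\xi_i\|>R\}$ is small, and then for $\sup_t\|f\|>R$ the depth is bounded by a vanishing quantity. For $\operatorname{td}_m^{(j)}$ the hypothesis is weaker (only the infimum of $\|f\|$ diverges), so the diagonal-reduction trick does not directly apply; instead I would argue that if $\inf_t\|f(t)\|\to\infty$, then for $\mu^{(m)}$-almost every $m$-tuple the point $(f(t_1),\dots,f(t_m))$ has large norm in $\mathbb{R}^{dm}$ and therefore escapes the convex hull of the (almost surely bounded in probability) sampled values, forcing the integrand in (eq:fubini) to zero and, by dominated convergence, the whole integral to zero.
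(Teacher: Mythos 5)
Your treatment of parts 1 and 2 is exactly the paper's: the authors dispose of affine invariance by citing \eqref{eq:g-plus} and \eqref{eq:affine}, and of phase invariance by citing \eqref{eq:phase-variation}, precisely the equivariance-of-bands argument you spell out. Where you genuinely diverge is part 3: the paper offers \emph{no} proof of vanishing at infinity (its proof paragraph silently covers only the first two items), so your argument is added content rather than a reproduction. Your plan for $\operatorname{bd}_m^{(j)}$ is sound: the diagonal choice $t_1=\dots=t_m=t$ gives $\operatorname{band}_m\subset\operatorname{band}_1$, hence $\operatorname{bd}_m^{(j)}\leq\operatorname{bd}_1^{(j)}$; then $1$-band membership forces $\sup_T\|f\|\leq\max_i\sup_T\|\xi_i\|$, and the union bound plus tightness of $\sup_T\|\xi\|$ finishes it. Note, however, that the ``tightness-type bound'' you flag is not merely a technical nicety but a genuine implicit hypothesis: it holds automatically only when the sample paths are a.s.\ bounded (e.g.\ continuous functions on compact $T$). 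If $\sup_T\|\xi\|=\infty$ with positive probability the statement can actually fail: for $T=[0,\infty)$, $d=1$, $\xi(t)=Xt$ with $X$ supported on $[-2,-1]\cup[1,2]$, the functions $f_n(t)=\min(n,t)$ have $\sup_T|f_n|=n\to\infty$ yet lie in $\operatorname{band}_1(\xi_1,\dots,\xi_j)$ whenever both signs occur among the $X_i$, an event of probability $1-2^{1-j}$. Your argument for $\operatorname{td}_m^{(j)}$ via \eqref{eq:fubini} is correct and, as you indicate, needs no such boundedness: for each fixed $(t_1,\dots,t_m)$ the generators $(\xi_i(t_1),\dots,\xi_i(t_m))$ are a.s.\ finite random vectors, membership in their convex hull bounds the norm of $(f(t_1),\dots,f(t_m))\,$ (which is at least $\inf_T\|f\|$) by the maximum of their norms, so the integrand tends to zero pointwise and dominated convergence applies. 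In short: parts 1--2 match the paper; part 3 is a correct completion of a step the paper leaves unargued, modulo the a.s.-boundedness caveat you yourself identified for the band depth.
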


The affine invariance of both depths follows from the affine
invariance of the $m$-bands, see (\ref{eq:g-plus}), (\ref{eq:affine}),
while the phase-invariance of the band depth follows from
(\ref{eq:phase-variation}).

In practice, the functions are going to be evaluated over a finite set
of time points, thus $T=\{t_1,\dots,t_k\}$ and probability $P$ is a
distribution on $(\mathbb{R}^d)^k$. Furthermore, the sample of functions
$f_1,\dots,f_n$ to be used to determine an empirical $m$-band depth
should have size at least $n\geq j\geq dm+1$\,.

\begin{theorem}
  If $P$ is absolutely continuous, for any $n\geq j\geq dm+1$ we have:
  \begin{enumerate}
    \setcounter{enumi}{3}
  \item \emph{maximality at the center.} if $P$ is angularly symmetric
    about the point $(f(t_1),\dots,f(t_k))$, function $f$ will be the
    deepest with regard to the time-share depth, and
    $\operatorname{td}^{(j)}_m(f;P)=1-2^{1-j}\sum_{i=0}^{dm-1}{j-1\choose i}$.
  \item \emph{consistency.} band depth
    $\sup_{f\in\mathbb{E}}|\operatorname{bd}_m^{(j)}(f;f_1,\dots,f_n)-\operatorname{bd}_m^{(j)}(f;P)|\rightarrow
    0$ $a.s.$ and time-share depth
    $\sup_{f\in\mathbb{E}}|\operatorname{td}_m^{(j)}(f;f_1,\dots,f_n)-\operatorname{td}_m^{(j)}(f;P)|\rightarrow
    0$ $a.s.$
  \end{enumerate}
\end{theorem}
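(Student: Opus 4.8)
The plan is to handle the two assertions separately, using throughout that with $T=\{t_1,\dots,t_k\}$ a function is a point of $\mathbb{R}^{dk}$ and $P$ a law on $\mathbb{R}^{dk}$, so that everything reduces to finite-dimensional convex-hull-membership probabilities. For the \emph{value} in~(4) I would first record that angular symmetry is inherited by linear images. If $Y=\xi-(f(t_1),\dots,f(t_k))$ is angularly symmetric about the origin, then for the projection $\pi$ onto the coordinates indexed by a tuple $(t_{a_1},\dots,t_{a_m})$ the direction $\pi(Y)/\|\pi(Y)\|$ depends on $Y$ only through $U=Y/\|Y\|$, via the \emph{odd} map $u\mapsto\pi(u)/\|\pi(u)\|$; hence $U\stackrel{d}{=}-U$ forces $\pi(Y)$ to be directionally symmetric. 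Thus each $m$-marginal $(\xi(t_{a_1}),\dots,\xi(t_{a_m}))$ is absolutely continuous and angularly symmetric about $(f(t_{a_1}),\dots,f(t_{a_m}))$, so for every tuple of \emph{distinct} time points Wendel's theorem \cite{wen62} (in $\mathbb{R}^{dm}$) gives
\begin{multline*}
  \mathbf{P}\{(f(t_{a_1}),\dots,f(t_{a_m}))\in\operatorname{conv}(\{(\xi_i(t_{a_1}),\dots,\xi_i(t_{a_m}))\}_{i=1}^j)\}\\
  =1-2^{1-j}\sum_{i=0}^{dm-1}{j-1\choose i}\,.
\end{multline*}
Inserting this constant integrand into \eqref{eq:fubini} yields the stated value, provided $\mu^{(m)}$ charges only tuples with distinct entries (automatic when $\mu$ is non-atomic; for atomic $\mu$ the diagonal tuples contribute the same expression with $dm$ replaced by $d$ times the number of distinct entries, which I would flag explicitly).

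For the \emph{maximality} in~(4) I would argue tuple by tuple. Fixing a tuple, the integrand in \eqref{eq:fubini}, viewed as a function of the tested point, is the simplicial-type containment probability $x\mapsto\mathbf{P}\{x\in\operatorname{conv}(Y_1,\dots,Y_j)\}$ of the corresponding marginal law, and the classical maximality property of angular (simplicial) depth \cite{liu90} says it is maximised at the centre of angular symmetry, i.e. at $(f(t_{a_1}),\dots,f(t_{a_m}))$. Replacing $f$ by any $g$ can therefore only decrease each integrand, and monotonicity of the integral in \eqref{eq:fubini} shows $f$ is the deepest. The main obstacle is precisely this marginal maximality: the sign-flip symmetrisation $\xi_i\mapsto-\xi_i$ that produces the Wendel count is only distribution-preserving at the centre, so for an off-centre point one must instead compare half-space separation probabilities and show that moving the tested point away from the centre cannot decrease the probability of lying outside the hull. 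I would invoke the center-maximality theorem for angularly symmetric laws, after checking that its hypotheses (absolute continuity and angular symmetry of the marginal, both established above) hold.

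For \emph{consistency}~(5) I would observe that $\operatorname{bd}_m^{(j)}(f;f_1,\dots,f_n)$ is a $U$-statistic of degree $j$ with bounded kernel $h(f;g_1,\dots,g_j)=\mathbf{1}_{f\in\operatorname{band}_m(g_1,\dots,g_j)}$ whose expectation is exactly $\operatorname{bd}_m^{(j)}(f;P)$; so for each fixed $f$ the strong law for $U$-statistics (Hoeffding) gives almost sure convergence. To upgrade this to the supremum over $f$, I would bound the complexity of the class $\{h(f;\cdot):f\in\mathbb{E}\}$: in the finite-dimensional setting membership in $\operatorname{band}_m(g_1,\dots,g_j)$ is the conjunction over the finitely many ($k^m$) time-tuples of convex-hull-membership events, each a Boolean combination of finitely many linear inequalities in $(f,g_1,\dots,g_j)$, whence the class has polynomial discrimination (finite VC dimension). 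A Glivenko–Cantelli theorem for $U$-processes over such a class then delivers $\sup_f|\operatorname{bd}_m^{(j)}(f;f_1,\dots,f_n)-\operatorname{bd}_m^{(j)}(f;P)|\to0$ almost surely. The time-share depth is identical: its kernel $\mu^{(m)}(\operatorname{TS}_m(f;g_1,\dots,g_j))$ is a $[0,1]$-valued average over time of the same indicators, so the same VC bound and $U$-process argument apply. Here the main obstacle is the uniform-in-$f$ step, namely verifying the polynomial-discrimination bound and quoting the appropriate $U$-process Glivenko–Cantelli theorem; the pointwise law and the reduction to finitely many linear inequalities are routine.
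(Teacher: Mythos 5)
Your overall strategy coincides with the paper's: the proof of (4) rests on the Fubini representation \eqref{eq:fubini} plus Wendel's theorem \cite{wen62} applied to each $dm$-dimensional marginal, and the proof of (5) reduces, in the finite-$T$ setting in which the theorem is placed, to a uniform law of large numbers for $U$-statistics whose kernels come from a class of polynomial discrimination; the paper routes the latter through \cite[Th.~1]{due92} and \cite[Th.~3]{lop:sun:lin:gen14}, which is the same VC-type argument you spell out explicitly. You are, however, more careful than the paper on two points it passes over in silence: (i) you verify that angular symmetry is inherited by coordinate projections via the odd map $u\mapsto\pi(u)/\|\pi(u)\|$, which is needed before Wendel can be invoked marginal by marginal; and (ii) you notice that when $\mu$ has atoms --- which is exactly the paper's finite-$T$ setting with normalised counting measure --- the tuples with repeated time instances carry positive $\mu^{(m)}$-mass and contribute Wendel probabilities in dimension $dr$ with $r<m$, which are strictly \emph{larger} than $1-2^{1-j}\sum_{i=0}^{dm-1}{j-1\choose i}$; so the displayed constant is exact only for non-atomic $\mu$ (or if the integration is restricted to tuples of distinct instances). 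That is a genuine catch which the paper's one-line justification does not see.

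The one step you have not actually closed is the per-tuple maximality. You correctly observe that Wendel's symmetrisation yields only the \emph{value} at the centre, and that maximality needs the separate statement that $x\mapsto\mathbf{P}\{x\in\operatorname{conv}(Y_1,\dots,Y_j)\}$ is maximised at the centre of angular symmetry; but the result you invoke for this, \cite{liu90}, is proved there for the simplicial depth, i.e.\ for $j=N+1$ points in $\mathbb{R}^N$, whereas here $j$ may be any integer $\ge dm+1$. An extension of Liu's argument (or of a Wagner--Welzl type upper bound) to general $j$ is needed and is not supplied. To be fair, this gap is shared with --- indeed inherited from --- the paper, whose own proof attributes maximality directly to Wendel's theorem, which does not give it either. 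A second, minor, inaccuracy: membership of $x$ in $\operatorname{conv}(y_1,\dots,y_j)$ is not a Boolean combination of finitely many \emph{linear} inequalities in $(x,y_1,\dots,y_j)$; you should instead reduce via Carath\'eodory to sign conditions on determinants (polynomial in the coordinates), which still gives finite VC dimension and leaves your Glivenko--Cantelli argument intact.
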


The properties of the time-share depth rely on formula
(\ref{eq:fubini}) that makes is possible to write it as an average of
the probability that a point lies in the convex hull of independent
copies of a random vector. The maximality at center follows from the
main result in \cite{wen62} which determines the probability inside
the integral in (\ref{eq:fubini}), see (\ref{eq:wendel}), while the
consistency can be proved in a similar way to
\cite[Th.3]{lop:sun:lin:gen14} extending the uniform consistency of
the empirical simplicial depth \cite[Th.1]{due92} to the one of the
probability that a point lies in the convex hull of a fixed number of
independent copies of a random vector. Such an extension, which relies
on probabilities of intersections of open half-spaces, can be adapted
to prove the consistency of the empirical $m$-band depth.

\section{Data examples}
\label{sec:simulation-study}

\subsubsection*{Simulated data}

Fig.~\ref{fig:simulated} shows $17$ curves which are
evaluated at $T=\{1,2,\dots,9\}$. Among the $17$ curves, there is a
clear shape outlier (marked as \texttt{d}) that lies \emph{deep}
within the bunch of curves.  Such an outlier will not be detected by
the outliergram from \cite{arr:rom14} due to its high depth value with
regard to both of the $1$-band depth and half-region depth
(see~\cite{lop:rom11}). Nevertheless, its anomalous shape is
detected by any $m$-band depth with $m\geq 2$.

\begin{figure}[h]
\sidecaption[h]
  \includegraphics[scale=0.55]{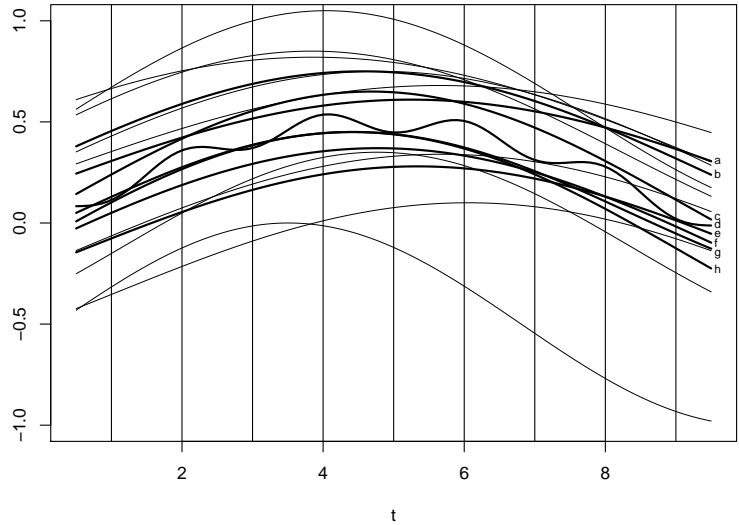}
  \caption{$17$ curves evaluated at $\{1,2,\dots,9\}$. The eight deepest curves are thicker than the others and each of them is assigned a letter from \texttt{a} to \texttt{h}. Five deepest curves for $\operatorname{bd}^{(4)}_1$ (in order): \texttt{d},\texttt{c},\texttt{f},\texttt{g},\texttt{a}, for $\operatorname{td}^{(4)}_1$: \texttt{d},\texttt{c},\texttt{g},\texttt{a},\texttt{f}, for $\operatorname{bd}^{(4)}_2$: \texttt{g},\texttt{b},\texttt{f},\texttt{e},\texttt{a}, and for $\operatorname{td}^{(4)}_2$: \texttt{g},\texttt{f},\texttt{c},\texttt{d},\texttt{h}.}\label{fig:simulated}
\end{figure}

It is remarkable that curve \texttt{d}, which is the deepest curve with respect to
the usual band depth and modified band depth ($\operatorname{bd}^{(4)}_1$ and
$\operatorname{td}^{(4)}_1$) is among the less deep curves for the $2$-band depth
($\operatorname{bd}^{(4)}_2$) and is only the fourth deepest curve for its
time-share depth ($\operatorname{td}^{(4)}_2$). The reason for this last fact is that if we
restrict to either of the sets of time points $\{1,3,5,7,9\}$ or $\{2,4,6,8\}$, curve \texttt{d} is not a shape outlier with respect to them.

\subsubsection*{Real data}

The nominal Gross Domestic Product per capita of the 28 countries of
the European Union (2004--2013) was obtained from the EUROSTAT
web-site and is represented in Fig.~\ref{fig:gdp}. The missing
observation that corresponds to Greece, 2013 was replaced by the value obtained from the FOCUSECONOMICS
web-site.

\begin{figure}[h]
\sidecaption[h]
  \includegraphics[scale=0.5]{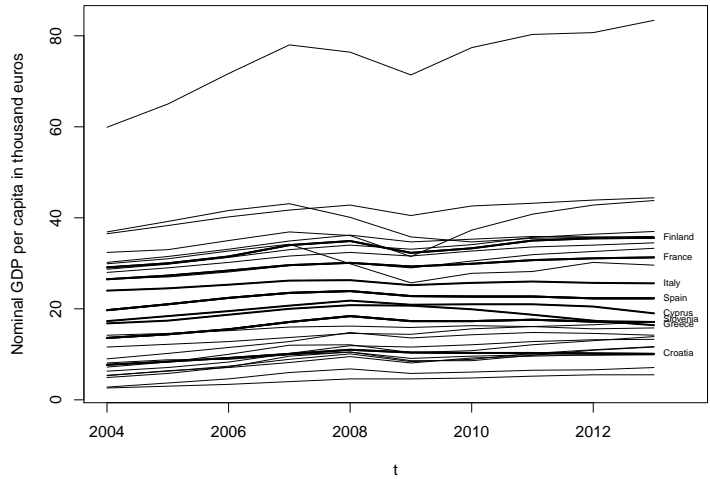}
  \caption{Evolution of the nominal GDP per capita between 2004 and
    2013 at the EU countries. Five deepest curves for $\operatorname{bd}^{(5)}_1$ (in order): Cyprus,
    Spain, Italy, Greece, and Slovenia, for $\operatorname{bd}^{(5)}_2$: Spain,
    Slovenia, France, Croatia, and Finland, and for $\operatorname{bd}^{(5)}_2$
    space-reduced with $S=\{(t_1,t_2):\,|t_1-t_2|=1\}$: Croatia,
    Slovenia, Spain, Finland, and France.}\label{fig:gdp}
\end{figure}

The deepest curve with regard to the band depth
($\operatorname{bd}^{(5)}_1$) is the one of Cyprus. Interestingly, Cyprus suffered the 2012-13 Cypriot
financial crisis at the end of the considered period and its GDP per capita experienced a decay in 2013 in comparison with its 2012 figure much greater than the one of any other of the EU countries. Also the Greek
curve is among the five deepest ones for $\operatorname{bd}^{(5)}_1$ despite being the only country with a constant decrement in the second half of the considered time period. If we consider
$2$-bands, that take into account the shape of the curves, these two
curves are not any more considered representative of the evolution of
the GDP per capita in the EU.

\begin{acknowledgement}

The authors would like to thank Mar\'{\i}a \'Angeles Gil for the
opportunity to contribute in this tribute to Pedro Gil, to whom we do
sincerely appreciate.

Most of this work was carried over while IM was supported by the Chair
of Excellence Programme of the University Carlos III and the Santander
Bank. At that time both authors benefited from discussions with
Professor Juan Romo. IM is grateful to the Department of Statistics of
the University Carlos III in Madrid for the hospitality.
\end{acknowledgement}

\bibliographystyle{spbasic}

\begin{thebibliography}{99.}


\bibitem{arr:rom14}
A.~Arribas-Gil and J.~Romo.
\newblock Shape outlier detection and visualization for functional data: the
  outliergram.
\newblock {\em Biostatistics}, 15:603--619, 2014.

\bibitem{cas10}
I.~Cascos.
\newblock Data depth: multivariate statistics and geometry.
\newblock In W.~S. Kendall and I.~Molchanov, editors, {\em New Perspectives in
  Stochastic Geometry}, pages 398--426. Oxford University Press, Oxford, 2010.

\bibitem{chak:chaud14}
A.~Chakraborty and P.~Chaudhuri.
\newblock On data depth in infinite dimensional spaces.
\newblock {\em Ann. Inst. Statist. Math.}, 66(2):303--324, 2014.

\bibitem{claes:hub:slaet:14}
G.~Claeskens, M.~Hubert, L.~Slaets, and K.~Vakili.
\newblock Multivariate functional halfspace depth.
\newblock {\em J. Amer. Statist. Assoc.}, 109(505):411--423, 2014.

\bibitem{due92}
L.~D\"umbgen.
\newblock Limit theorems for the simplicial depth.
\newblock {\em Statist. Probab. Lett.}, 14:119--128, 1992.

\bibitem{fraim:mun01}
R.~Fraiman and G.~Muniz.
\newblock Trimmed means for functional data.
\newblock {\em Test}, 10:419--414, 2001.

\bibitem{iev:pag13}
F.~Ieva and A.~M. Paganoni.
\newblock Depth measures for multivariate functional data.
\newblock {\em Comm. Stat. Theory and Methods}, 42:1265--1276, 2013.

\bibitem{kuel:zin13}
J.~Kuelbs and J.~Zinn.
\newblock Concerns with functional depth.
\newblock {\em ALEA Lat. Am. J. Probab. Math. Stat.}, 10(2):831--855, 2013.

\bibitem{liu:par:sin99}
R.~Liu, J.~M. Parelis, and K.~Singh.
\newblock Multivariate analysis by data depth: {Descriptive} statistics, graphs
  and inference (with duiscussion).
\newblock {\em Ann. Statist.}, 27:783--858, 1999.

\bibitem{liu90}
R.~Y. Liu.
\newblock On a notion of data depth based on random simplices.
\newblock {\em Ann. Statist.}, 18:405--414, 1990.

\bibitem{lop:rom09}
S.~L{\'o}pez-Pintado and J.~Romo.
\newblock On the concept of depth for functional data.
\newblock {\em J. Amer. Statist. Assoc.}, 104:718--734, 2009.

\bibitem{lop:rom11}
S.~L{\'o}pez-Pintado and J.~Romo.
\newblock A half-region depth for functional data.
\newblock {\em Comp. Statist. Data Anal.}, 55:1679--1695, 2011.

\bibitem{lop:sun:lin:gen14}
S.~L\'opez-Pintado, Y.~Sun, J.~K. Lin, and M.~G. Genton.
\newblock Simplicial band depth for multivariate functional data.
\newblock {\em Adv. Data Anal. Classif.}, 8:321--338, 2014.

\bibitem{nag17}
S.~Nagy.
\newblock Integrated depth for measurable functions and sets.
\newblock {\em Statist. Probab. Lett.}, 123:165--170, 2017.

\bibitem{niet:bat16}
A.~Nieto-Reyes and H.~Battey.
\newblock A topologically valid definition of depth for functional data.
\newblock {\em Statist. Sci.}, 31(1):61--79, 2016.

\bibitem{wen62}
J.~G. Wendel.
\newblock A problem in geometric probability.
\newblock {\em Math. Scand.}, 11:109--111, 1962.

\bibitem{zuo:ser00a}
Y.~Zuo and R.~Serfling.
\newblock General notions of statistical depth function.
\newblock {\em Ann. Statist.}, 28:461--482, 2000.

\end{thebibliography}

\end{document}